\author[P.~Leonetti]{Paolo Leonetti}
\address{Universit\`a ``Luigi Bocconi''\\Department of Statistics\\Milan, Italy}
\email{leonetti.paolo@gmail.com}
\urladdr{\url{https://sites.google.com/site/leonettipaolo/}}%{\url{http://orcid.org/0000-0001-7819-5301}}
\keywords{Convexity, real vector spaces, (radially) lower semicontinuity.}
\subjclass[2010]{Primary: 25B62. Secondary: 26A51, 52A07.}
\title{A Characterization of Convex Functions}
\newtheorem{thm}{Theorem}%[section]
\newtheorem{cor}[thm]{Corollary}
\theoremstyle{definition}
\newtheorem{claim}{\textsc{Claim}}
   \def\MR#1{}
\begin{document}

\maketitle
\thispagestyle{empty}

\begin{abstract} 
\noindent{} Let $D$ be a convex subset of a real vector space. It is shown that a radially lower semicontinuous function $f: D\to \mathbf{R}\cup \{+\infty\}$ is convex if and only if for all $x,y \in D$ there exists $\alpha=\alpha(x,y) \in (0,1)$ such that $f(\alpha x+(1-\alpha)y) \le \alpha f(x)+(1-\alpha)f(y)$.
\end{abstract}

%%%%%%%%%%%%%%%%%%%%%%%%%%%%%%%%%%%%%%%%%%%%%%%%%%%%%%%%%%%%%
\section{Introduction.}

Let $X$ be a vector space over the real field and fix a convex set $D \subseteq X$. A function $f: D\to \mathbf{R}$ is convex whenever
\begin{equation}\label{def:convexity}
\textstyle f(\lambda x+(1-\lambda)y) \le \lambda f(x)+(1-\lambda)f(y)
\end{equation}
for all $\lambda \in [0,1]$ and for all $x,y \in D$. In addition, $f$ is said to be
\begin{enumerate}
\item \label{itemalphaconvex} $\alpha$\emph{-convex}, for some fixed $\alpha \in (0,1)$, if
$$
\textstyle f(\alpha x+(1-\alpha)y) \le \alpha f(x)+(1-\alpha)f(y)
$$
for all $x,y \in D$;
\item \label{itemmidconvex} \emph{midconvex} if
$$
\textstyle f\left(\frac{1}{2}(x+y)\right) \le \frac{1}{2}(f(x)+f(y))
$$
for all $x,y \in D$.
\end{enumerate}
It is clear that convex functions are $\alpha$-convex. In addition, it easily follows by the Dar\'{o}czy--P\'{a}les identity, see \cite[Lemma 1]{MR970258}, that $\alpha$-convex functions are midconvex, see also \cite{MR821804} and \cite[Proposition 4]{MR3542948}. Finally, we recall that if $f$ is midconvex then inequality \eqref{def:convexity} holds for all $x,y \in D$ and \emph{rationals} $\lambda \in [0,1]$. 

It is well known that, assuming the axiom of choice, there exist nonconvex functions that are $\alpha$-convex. On the other hand, a \emph{continuous} real function $f$ is convex if and only if it is $\alpha$-convex if and only if it is midconvex. We refer the reader to \cite{MR0442824} and references therein for a textbook exposition on convex functions. 

In this note we provide another characterization of convexity, assuming that the function $f$ is \emph{radially lower semicontinuous}, 
i.e.,
$$
f(x) \le \liminf_{t \,\downarrow\, 0}f(x+t(y-x))
$$
for all $x,y \in D$. 
Our main result is as follows:

\begin{thm}\label{thm:main}
Let $D$ be a convex subset of a real vector space. Then, a radially lower semicontinuous function $f: D\to \mathbf{R}\cup \{+\infty\}$ is convex if and only if, for all $x,y \in D$, there exists $\lambda=\lambda(x,y) \in (0,1)$ that satisfies inequality \eqref{def:convexity}.
\end{thm}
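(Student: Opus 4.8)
The forward implication is immediate: a convex function satisfies \eqref{def:convexity} for every $\lambda \in [0,1]$, in particular for some $\lambda \in (0,1)$. So the plan is to prove the converse, assuming $f$ is radially lower semicontinuous and that the pairwise inequality holds.

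The idea is to work along a single segment and reduce convexity to an openness argument. Fix $x,y \in D$; if $f(x) = +\infty$ or $f(y) = +\infty$, then \eqref{def:convexity} holds trivially for this pair and every $\lambda$, so I may assume $f(x), f(y) \in \mathbf{R}$. Define $g\colon [0,1] \to \mathbf{R} \cup \{+\infty\}$ by $g(t) = f((1-t)x + ty)$ and let $\ell(t) = (1-t)f(x) + tf(y)$ be the affine chord, so that the target inequality for all $\lambda$ is exactly $g \le \ell$ on $[0,1]$. Put $h = g - \ell$; then $h(0) = h(1) = 0$, and since $\ell$ is affine, $h$ inherits the pairwise inequality: for all $s,t \in [0,1]$ there is $\mu \in (0,1)$ with $h(\mu s + (1-\mu)t) \le \mu h(s) + (1-\mu)h(t)$.

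First I would record that radial lower semicontinuity of $f$ forces $g$, and hence $h$, to be lower semicontinuous on $(0,1)$: taking $q = y$ and $q = x$ in the definition yields $g(t_0) \le \liminf_{s \downarrow t_0} g(s)$ and $g(t_0) \le \liminf_{s \uparrow t_0} g(s)$ respectively, and these combine to ordinary lower semicontinuity at each interior point. Consequently the superlevel set $U = \{t \in [0,1] : h(t) > 0\}$ is open, and it omits the endpoints $0,1$; thus $U$ is an open subset of $(0,1)$ and splits into countably many disjoint open intervals.

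The heart of the argument, and the step I expect to be the real obstacle since the value $\mu$ is only known to exist and cannot be prescribed, is to rule out $U \ne \emptyset$ using this component structure. Suppose $(\alpha,\beta)$ is a connected component of $U$. Being the endpoints of a maximal subinterval of the open set $U$, the points $\alpha,\beta$ lie outside $U$, so $h(\alpha) \le 0$ and $h(\beta) \le 0$ (in particular both are finite). Applying the pairwise inequality to $s=\alpha$, $t=\beta$ produces some $\mu \in (0,1)$ and a point $t^\ast = \mu\alpha + (1-\mu)\beta \in (\alpha,\beta)$ with $h(t^\ast) \le \mu h(\alpha) + (1-\mu)h(\beta) \le 0$; but $t^\ast \in (\alpha,\beta) \subseteq U$ forces $h(t^\ast) > 0$, a contradiction. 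Hence $U = \emptyset$, i.e.\ $g \le \ell$, which is precisely \eqref{def:convexity} for the pair $x,y$ and every $\lambda$. As $x,y$ were arbitrary, $f$ is convex.
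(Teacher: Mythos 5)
Your proof is correct and follows essentially the same route as the paper: your bad set $U$ is precisely the complement of the paper's good set $S=\{\lambda : f(\lambda x+(1-\lambda)y)\le \lambda f(x)+(1-\lambda)f(y)\}$, your lower-semicontinuity argument for $g$ reproduces the paper's Claim 1 (closedness of $S$ via radial lower semicontinuity in both directions along the segment), and applying the pairwise inequality to the endpoints of a connected component of $U$ is exactly the paper's Claim 2 combined with its maximal-interval contradiction. The chord subtraction $h = g - \ell$ is only a cosmetic normalization; the substance of the two arguments is identical.
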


The following corollary, which has been conjectured by Miroslav Pavlovi\'{c} in the case of real-valued continuous functions, is immediate:
\begin{cor}
Let $I\subseteq \mathbf{R}$ be a nonempty interval. Then, a lower semicontinuous function $f: I\to \mathbf{R}\cup \{+\infty\}$ is convex if and only if, for all $x,y \in I$, there exists $\lambda=\lambda(x,y) \in (0,1)$ that satisfies inequality \eqref{def:convexity}.
\end{cor}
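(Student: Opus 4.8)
The \emph{only if} direction is immediate: if $f$ is convex then inequality \eqref{def:convexity} holds for every $\lambda \in [0,1]$, so any $\lambda \in (0,1)$ works. The content is the converse, and the plan is to reduce it to a one-dimensional statement on a single segment and then exploit lower semicontinuity to run a contradiction argument that does not require attainment of a maximum.

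First I would fix $x,y \in D$ and aim to prove \eqref{def:convexity} for all $\lambda \in [0,1]$. The cases $\lambda \in \{0,1\}$ are trivial, and if $f(x)=+\infty$ or $f(y)=+\infty$ the right-hand side is $+\infty$ for every $\lambda \in (0,1)$; so I may assume $f(x),f(y)\in\mathbf{R}$. Introduce $g\colon [0,1]\to \mathbf{R}\cup\{+\infty\}$, $g(t)=f((1-t)x+ty)$. Since $D$ is convex, all points $(1-t)x+ty$ lie in $D$, and a direct substitution shows that $g$ inherits the hypothesis: for all $s,u\in[0,1]$ there is $\lambda\in(0,1)$ with $g(\lambda s+(1-\lambda)u)\le \lambda g(s)+(1-\lambda)g(u)$. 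Moreover, evaluating the radial lower semicontinuity of $f$ at $p=(1-t_0)x+t_0y$ in the directions of $y$ and of $x$ shows that $g$ is right- and left-lower semicontinuous at each interior point (and one-sidedly so at the endpoints); hence $g$ is lower semicontinuous on $[0,1]$, i.e.\ each sublevel set $\{g\le c\}$ is closed.

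Next I would subtract the chord. Let $\ell(t)=(1-t)g(0)+tg(1)$ and $h=g-\ell$; then $h$ is lower semicontinuous, $h(0)=h(1)=0$, and $h$ still satisfies the exists-$\lambda$ inequality because $\ell$ is affine. Proving $g(t)\le \ell(t)$ for all $t$ amounts to proving $h\le 0$. Suppose, for contradiction, that $h(t^\ast)>0$ for some $t^\ast\in(0,1)$. Since the sublevel set $\{h\le 0\}$ is closed, $c:=\sup\{t\le t^\ast: h(t)\le 0\}$ and $d:=\inf\{t\ge t^\ast: h(t)\le 0\}$ belong to $\{h\le 0\}$, and one checks that $c<t^\ast<d$ with $h>0$ throughout $(c,d)$. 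Applying the hypothesis to the single pair $(c,d)$ now produces $\lambda\in(0,1)$ with $h(\lambda c+(1-\lambda)d)\le \lambda h(c)+(1-\lambda)h(d)\le 0$, whereas $\lambda c+(1-\lambda)d\in(c,d)$ forces $h(\lambda c+(1-\lambda)d)>0$, a contradiction. Thus $h\le 0$, and convexity of $f$ along $[x,y]$ follows.

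I expect the main obstacle to be exactly the point that motivates the whole construction: a lower semicontinuous function need not attain its supremum, so the textbook argument that inspects a maximizer of $h$ is unavailable. The device that circumvents this is to examine the connected component $(c,d)$ of the open superlevel set $\{h>0\}$ rather than any maximum; lower semicontinuity enters precisely through the closedness of $\{h\le 0\}$, which guarantees $h(c),h(d)\le 0$, and then a single invocation of the exists-$\lambda$ hypothesis at these endpoints suffices. The only remaining care is the bookkeeping in the reduction to $g$ — the transfer of both the exists-$\lambda$ property and of one-sided lower semicontinuity from $f$ to $g$ — but this is routine.
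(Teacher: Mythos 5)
Your proof is correct and follows essentially the same route as the paper's: your sublevel set $\{h\le 0\}$ is exactly (a chord-normalized version of) the paper's set $S=\{\lambda\in[0,1]: f(\lambda x+(1-\lambda)y)\le\lambda f(x)+(1-\lambda)f(y)\}$, its closedness via lower semicontinuity is the paper's Claim 1, and your single application of the exists-$\lambda$ hypothesis at the endpoints $c,d$ of a connected component of $\{h>0\}$ is precisely the paper's Claim 2 invoked at the endpoints of a maximal open interval of $[0,1]\setminus S$. The subtraction of the affine function $\ell$ and the parametrization by $t$ are cosmetic differences only.
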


It is worth noting that the lower semicontinuous assumption cannot be relaxed too much since there exists a nonconvex function $f: I \to \mathbf{R} \cup \{+\infty\}$ that is lower semicontinous everywhere in $I$ except at countably many points and such that, for all $x,y \in I$, there exists $\lambda=\lambda(x,y) \in (0,1)$ that satisfies inequality \eqref{def:convexity}. Indeed, it is sufficient to choose $I=\mathbf{R}$ and let $f$ be the indicator function of the rationals, i.e., $f(x)=1$ if $x$ is rational and $f(x)=0$ otherwise.

%%%%%%%%%%%%%%%%%%%%%%%%%%%%%%%%%%%%%%%%%%%%%%%%%%%%%%%%%%%%%%%%%%%%%%%%%%%%%%%%%%
\section{Proof of Theorem \ref{thm:main}.}

We need only to prove the ``if'' part. Moreover, the claim is obvious if $D$ is empty, hence let us suppose hereafter that $D\neq \emptyset$.

Fix $x,y \in D$ and define the set 
$$
S:=\{\lambda \in [0,1]: f(\lambda x+(1-\lambda)y) \le \lambda f(x)+(1-\lambda)f(y)\}.
$$
Notice that $S=[0,1]$ provided that $f(x)=+\infty$ or $f(y)=+\infty$. Hence, let us assume hereafter that $f(x)<+\infty$ and $f(y)<+\infty$.

\begin{claim}\label{claim1}
$S$ is a nonempty compact subset of $[0,1]$.
\end{claim}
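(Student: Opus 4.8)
The plan is to use that $S$ is a subset of the compact interval $[0,1]$, so that it suffices to prove $S$ is nonempty and closed. Nonemptiness is immediate: at $\lambda=0$ the defining inequality reads $f(y)\le f(y)$, and at $\lambda=1$ it reads $f(x)\le f(x)$, so $\{0,1\}\subseteq S$. The existence of an interior $\lambda$ granted by the hypothesis of the theorem is not needed here; it will enter only later in the argument.

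For closedness I would introduce the single-variable functions $g,h:[0,1]\to\mathbf{R}\cup\{+\infty\}$ defined by $g(\lambda):=f(\lambda x+(1-\lambda)y)$ and $h(\lambda):=\lambda f(x)+(1-\lambda)f(y)$, so that $S=\{\lambda\in[0,1]:g(\lambda)\le h(\lambda)\}$. Since $f(x)$ and $f(y)$ are finite, $h$ is an affine, hence continuous, real-valued function, and therefore $S$ is exactly the $0$-sublevel set of $g-h$. The whole argument thus reduces to proving that $g$ is lower semicontinuous on $[0,1]$: once this is established, $g-h$ is lower semicontinuous, being the sum of a lower semicontinuous function and a finite continuous one, and its sublevel set $S$ is closed.

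The heart of the matter is to upgrade the one-sided radial lower semicontinuity of $f$ into genuine two-sided lower semicontinuity of $g$ at each $\lambda_0\in[0,1]$. Writing the segment as $p(\lambda):=\lambda x+(1-\lambda)y\in D$, I would apply the radial hypothesis at the base point $p(\lambda_0)$ in two directions. Taking the direction toward $x$, the curve $t\mapsto p(\lambda_0)+t(x-p(\lambda_0))$ agrees with $\lambda\mapsto p(\lambda)$ under the reparametrization $\lambda=\lambda_0+t(1-\lambda_0)$, which sends $t\downarrow 0$ to $\lambda\downarrow\lambda_0$; hence radial lower semicontinuity yields $g(\lambda_0)\le\liminf_{\lambda\downarrow\lambda_0}g(\lambda)$. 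Taking instead the direction toward $y$, with the reparametrization $\lambda=\lambda_0(1-t)$ sending $t\downarrow 0$ to $\lambda\uparrow\lambda_0$, gives the symmetric bound $g(\lambda_0)\le\liminf_{\lambda\uparrow\lambda_0}g(\lambda)$. Combining the two one-sided estimates establishes lower semicontinuity of $g$ at $\lambda_0$; at the endpoints $\lambda_0\in\{0,1\}$ only the relevant one-sided estimate is required, and the formulas degenerate correctly.

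I expect the main obstacle to be precisely this conversion step: verifying that each reparametrization is a monotone bijection between a right-neighborhood of $0$ in the variable $t$ and the appropriate one-sided neighborhood of $\lambda_0$, so that the $\liminf$ as $t\downarrow 0$ genuinely controls the corresponding one-sided $\liminf$ of $g$, and checking that every intermediate point $p(\lambda_0)+t(x-p(\lambda_0))$ lies in $D$. The latter holds by convexity of $D$, since these points lie on the segment joining $p(\lambda_0)$ to $x$, respectively to $y$. Everything else is routine bookkeeping.
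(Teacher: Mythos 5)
Your proposal is correct and takes essentially the same route as the paper: the paper likewise reduces compactness to closedness via $\{0,1\}\subseteq S\subseteq[0,1]$, and its auxiliary sequences $t_n=1-\frac{\lambda_n}{\lambda}$ and $s_n=1-\frac{1-\lambda_n}{1-\lambda}$ are exactly your two reparametrizations converting radial lower semicontinuity at a point of the segment (in the directions of $y$ and of $x$) into the two one-sided $\liminf$ estimates. The only difference is organizational: the paper verifies the defining inequality of $S$ directly along monotone sequences in $S$, whereas you package the identical computation as lower semicontinuity of $g(\lambda)=f(\lambda x+(1-\lambda)y)$ on all of $[0,1]$ and then invoke closedness of sublevel sets of $g-h$.
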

\begin{proof}
Note that $\{0,1\}\subseteq S\subseteq [0,1]$, hence $S$ is nonempty and bounded. 

Let $(\lambda_n)$ be a sequence in $S$ such that $\lambda_n \uparrow \lambda> 0$. Define the real sequence $(t_n)$ by $t_n:=1-\frac{\lambda_n}{\lambda}$ for each $n$ and note that $t_n \downarrow 0$. Since $f$ is radially lower semicontinuous, it follows that
\begin{displaymath}
\begin{split}
f(\lambda x+(1-\lambda)y) &\le \liminf_{n\to \infty}f(\lambda x+(1-\lambda)y)+t_n(y-\lambda x-(1-\lambda)y)))\\
&= \liminf_{n\to \infty}f(\lambda_n x +(1-\lambda_n)y)  \\
&\le \liminf_{n\to \infty}\lambda_n f(x)+(1-\lambda_n)f(y)=\lambda f(x)+(1-\lambda)f(y).
\end{split}
\end{displaymath}

Similarly, given a sequence $(\lambda_n)$ in $S$ such that $\lambda_n \downarrow \lambda< 1$, let $(s_n)$ be the sequence defined by $s_n:=1-\frac{1-\lambda_n}{1-\lambda}$ for each $n$ and note that $s_n \downarrow 0$. Then
\begin{displaymath}
\begin{split}
f(\lambda x+(1-\lambda)y) &\le \liminf_{n\to \infty}f(\lambda x+(1-\lambda)y)+s_n(x-\lambda x-(1-\lambda)y)))\\
&= \liminf_{n\to \infty}f(\lambda_n x+(1-\lambda_n)y) \le \lambda f(x)+(1-\lambda)f(y).
\end{split}
\end{displaymath}
Therefore $S$ is also closed, proving the claim.
\end{proof}

\begin{claim}\label{claim2}
$(\lambda_1,\lambda_2) \cap S \neq \emptyset$ for all $\lambda_1,\lambda_2 \in S$ with $\lambda_1<\lambda_2$.
\end{claim}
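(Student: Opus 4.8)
The plan is to exploit the hypothesis precisely where it is strongest, namely as a statement about \emph{pairs} of points of $D$, applied to the two points of the segment $[x,y]$ singled out by $\lambda_1$ and $\lambda_2$. Concretely, fix $\lambda_1 < \lambda_2$ in $S$ and set $a := \lambda_1 x + (1-\lambda_1)y$ and $b := \lambda_2 x + (1-\lambda_2)y$. Both $a$ and $b$ lie in $D$ because $D$ is convex, and both have finite $f$-value: indeed $\lambda_1, \lambda_2 \in S$ forces $f(a) \le \lambda_1 f(x) + (1-\lambda_1)f(y) < +\infty$ and analogously $f(b) < +\infty$, since $f(x), f(y) < +\infty$ by the standing assumption.

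First I would invoke the hypothesis on the pair $(a,b)$ to obtain some $\mu = \mu(a,b) \in (0,1)$ with $f(\mu a + (1-\mu)b) \le \mu f(a) + (1-\mu)f(b)$. The crucial algebraic observation is that a convex combination of two points of $[x,y]$ is again a point of $[x,y]$: expanding gives $\mu a + (1-\mu)b = \lambda^* x + (1-\lambda^*)y$, where $\lambda^* := \mu \lambda_1 + (1-\mu)\lambda_2$. Since $\mu \in (0,1)$ and $\lambda_1 < \lambda_2$, the number $\lambda^*$ is a \emph{strict} convex combination of $\lambda_1$ and $\lambda_2$, hence $\lambda^* \in (\lambda_1, \lambda_2)$; this is exactly the parameter I need to land inside the open interval.

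It then remains to check that $\lambda^* \in S$. I would chain the $\mu$-inequality with the two membership inequalities supplied by $\lambda_1, \lambda_2 \in S$, bounding $f(a)$ and $f(b)$ from above and then regrouping the resulting upper bound according to the coefficients of $f(x)$ and of $f(y)$. These coefficients collapse to $\mu\lambda_1 + (1-\mu)\lambda_2 = \lambda^*$ and $\mu(1-\lambda_1) + (1-\mu)(1-\lambda_2) = 1-\lambda^*$, so the bound becomes exactly $\lambda^* f(x) + (1-\lambda^*)f(y)$, whence $\lambda^* \in S$. Together with $\lambda^* \in (\lambda_1, \lambda_2)$, this yields $(\lambda_1,\lambda_2) \cap S \neq \emptyset$.

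I do not expect a genuine obstacle: the argument is a single application of the hypothesis followed by the linearity of the parametrization $\lambda \mapsto \lambda x + (1-\lambda)y$. The only points requiring care are confirming that $f(a)$ and $f(b)$ are finite, so that the inequalities compose without indeterminate expressions, and recording that it is the \emph{strict} interiority $\mu \in (0,1)$ which forces $\lambda^*$ to be \emph{strictly} between $\lambda_1$ and $\lambda_2$ — exactly the feature needed to land in the open interval rather than merely at an endpoint.
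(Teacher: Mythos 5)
Your proposal is correct and follows essentially the same route as the paper's own proof: both apply the hypothesis to the pair $a=\lambda_1 x+(1-\lambda_1)y$, $b=\lambda_2 x+(1-\lambda_2)y$, obtain the intermediate parameter $\mu\lambda_1+(1-\mu)\lambda_2\in(\lambda_1,\lambda_2)$, and chain the inequalities to place it in $S$. Your extra remark that $f(a),f(b)<+\infty$ is a harmless (and slightly more careful) addition the paper leaves implicit.
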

\begin{proof}
Fix $\lambda_1, \lambda_2 \in S$ with $\lambda_1<\lambda_2$ (note that this is possible since $\{0,1\}\subseteq S$) and define
$$
a = \lambda_1 x + (1-\lambda_1)y\,\,\,\text{ and }\,\,\,b = \lambda_2 x + (1-\lambda_2)y.
$$
Hence $a,b \in D$ and, by hypothesis, there exists $\lambda=\lambda(a,b)\in (0,1)$ such that
$$
f(\lambda a+(1-\lambda)b) \le \lambda f(a)+(1-\lambda)f(b).
$$
At this point, define $\lambda^\prime := \lambda \lambda_1 + (1-\lambda) \lambda_2$ and observe that
\begin{displaymath}
\begin{split}
f(\lambda^\prime x+(1-\lambda^\prime)y)&=f(\lambda a+(1-\lambda)b) \le \lambda f(a)+(1-\lambda)f(b)\\
&\le \lambda (\lambda_1 f(x)+(1-\lambda_1)f(y))+(1-\lambda)(\lambda_2 f(x) +(1-\lambda_2)f(y)) \\
&=\lambda^\prime f(x)+(1-\lambda^\prime) f(y).
\end{split}
\end{displaymath}
Therefore $\lambda^\prime \in S$. The claim follows by the fact that $\lambda_1<\lambda^\prime<\lambda_2$.
\end{proof}

To complete the proof of Theorem \ref{thm:main}, let us suppose for the sake of contradiction that 
$$
\Lambda:=[0,1] \setminus S\neq \emptyset.
$$
Note that $\Lambda$ is open since it can be written as $(0,1) \cap S^c$ and $S^c$ is open by Claim \ref{claim1}. Fix $\lambda \in \Lambda$. Hence, there is a maximal open interval $(\lambda_1,\lambda_2)$ containing $\lambda$ and contained in $\Lambda$. 
Then $\lambda_1, \lambda_2 \in S$ while $(\lambda_1,\lambda_2) \cap S = \emptyset$, contradicting Claim \ref{claim2}. 

This shows that $f$ is necessarily convex. 

\subsection*{Acknowledgments.}
The author is grateful to the editor and two anonymous referees for their remarks that allowed a substantial improvement of the presentation.

%%%%%%%%%%%%%%%%%%%%%%%%%%%%%%%%%%%%%%%%%%%%%%%%%%%%%%%%%%%%%%%%%%%%%%%%%%%
\bibliographystyle{amsplain}

%%%%%%%%%%%%%%%%%%%%%%%%%%%%%%%
%\nocite{*}
%\bibliographystyle{amsplain}
%\bibliography{rg_convex}

\end{document}